\newtheorem{theo}{Theorem}
\newtheorem{prop}[theo]{Proposition}
\newtheorem{lemma}[theo]{Lemma}
\theoremstyle{remark}
\newtheorem{rem}{Remark}
\newcommand{\ns}{\operatorname{ns}}
\newcommand{\nr}{\operatorname{nr}}
\begin{document}

\thispagestyle{plain}

\title{On the number of nonisomorphic subtrees of a tree}

\author{\'Eva Czabarka, L\'aszl\'o A. Sz\'ekely }
\address{Department of Mathematics \\ University of South Carolina \\ Columbia, SC 29208 \\ USA}
\email{\{czabarka,szekely\}@math.sc.edu }
\thanks{The second author was supported in part by the  NSF DMS,  grant number 1300547,  the third author was supported in part by the National
Research Foundation of South Africa, grant number 96236.}
\author{Stephan Wagner}
\address{Department of Mathematical Sciences \\ Stellenbosch University \\ Private Bag X1, Matieland 7602 \\ South Africa}
\email{swagner@sun.ac.za}
\subjclass[2010]{05C05}
\keywords{trees, subtrees, nonisomorphic trees, extremal problem}

\date{\today}

\begin{abstract}
We show that a tree of order $n$ has at most $O(5^{n/4})$ nonisomorphic subtrees, and that this bound is best possible. We also prove an analogous result for the number of nonisomorphic rooted subtrees of a rooted tree.
\end{abstract}

\maketitle

\section{introduction}

Subtrees of a tree have been studied extensively: Jamison \cite{jamison1983average, jamison1984monotonicity} investigated the average number of vertices
in a subtree, Sz\'ekely and Wang studied the number of subtrees of trees \cite{subtrees2005, largest2006}.
Chung, Graham and Coppersmith \cite{universal} found the smallest order (asymptotically) of a tree that contains all $n$-vertex  trees as subtrees.
A number of extremal results are known: in particular, it is known that a tree of order $n$ has at least $\binom{n+1}{2}$ subtrees (with equality for the path) and at most $2^{n-1}+1$ subtrees (with equality for the star).

 Things change considerably, however, if one considers the number of distinct \emph{nonisomorphic} subtrees.
Bubeck and Linial \cite{BubeckLinial} recently analyzed the distribution of subtrees of fixed order by isomorphism type. In this paper we consider the extremal problem of determining the smallest and largest number of nonisomorphic subtrees of a tree.
Both the path and the star have only very few nonisomorphic subtrees (equal to the number of vertices, to be precise), and this is in fact the minimum:

\begin{prop}
Every tree of order $n$ has at least $n$ distinct nonisomorphic subtrees.
\end{prop}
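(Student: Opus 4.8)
The plan is to produce, for every $k\in\{1,2,\dots,n\}$, a subtree of $T$ having exactly $k$ vertices; since two subtrees with different numbers of vertices can never be isomorphic, these $n$ subtrees are automatically pairwise nonisomorphic, which is exactly the claim.

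First I would verify by induction on $k$ that $T$ has a subtree of every order $k\le n$. For $k=1$ a single vertex works. Suppose $S$ is a subtree of $T$ with $k<n$ vertices; since $T$ is connected and $V(S)\neq V(T)$, there is an edge of $T$ joining a vertex of $S$ to some vertex $v\notin V(S)$, and adjoining $v$ together with this edge to $S$ gives a subtree of order $k+1$. Starting from $k=1$ and iterating up to $k=n$ yields subtrees $S_1,S_2,\dots,S_n$ with $|V(S_i)|=i$.

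Finally, I would note that an isomorphism of graphs preserves the number of vertices, so $S_i$ and $S_j$ are nonisomorphic whenever $i\neq j$; hence $T$ has at least $n$ distinct nonisomorphic subtrees. The path $P_n$ and the star $K_{1,n-1}$ each have exactly $n$ of them, so the bound is sharp.

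There is essentially no obstacle: the only step needing any care at all is the inductive claim that subtrees of all intermediate orders occur, and this is an immediate consequence of the connectivity of $T$. The content of the paper lies entirely in the upper bound, not in this proposition.
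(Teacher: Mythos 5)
Your argument is correct and is essentially the paper's own: both proofs exhibit subtrees of every order $k=1,\dots,n$ (the paper by repeatedly deleting leaves, you by growing a connected subgraph one vertex at a time) and observe that subtrees of different orders are trivially nonisomorphic. The difference in direction is immaterial.
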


\begin{proof}
This is easily established by noticing that every tree of order $n$ has subtrees of every order $k$ between $1$ and $n$ (obtained by repeatedly removing leaves from the original tree), which are trivially nonisomorphic.
\end{proof}

The maximum is much more difficult to obtain; we will show that it is of the order $\Theta(5^{n/4})$. In a certain sense, this is a dual question to the aforementioned problem of Chung, Graham and Coppersmith: while they were looking for the minimum number of vertices needed to contain all small trees, we would like to know how many different trees can fit into a tree with a given number of vertices.

For our purposes, it turns out to be useful to also consider a closely related problem: for a rooted tree, we count the number of nonisomorphic subtrees that contain the root, where non-isomorphism is understood in the rooted sense, i.e., two subtrees containing the root are considered isomorphic if there is an isomorphism between them that maps the root to itself. Again, the maximum number is $\Theta(5^{n/4})$, as we will prove in the following. Let us first introduce some notation. 

For a tree $T$, we let $|T|$ be the number of vertices and $\ns(T)$ be the number of nonisomorphic subtrees of $T$. Likewise, for a rooted tree $T$, we let $\nr(T)$ be the number of nonisomorphic subtrees containing the root, in the sense explained in the previous paragraph. Moreover, we write
$$S_n = \max_{|T| = n} \ns(T) \qquad \text{and} \qquad R_n = \max_{|T| = n} \nr(T)$$
for the respective maxima among trees with $n$ vertices. The following table gives explicit values for small $n$, obtained by means of a comprehensive computer search:

\begin{table}[h]
\begin{center}
\begin{tabular}{|l||c|c|c|c|c|c|c|c|c|c|}
\hline
$n$ &1 &2 &3 &4 &5 &6 &7 &8 &9 &10 \\
\hline
$S_n$ &1 &2 &3 &4 &6 &8 &11 &16 &23 &33 \\
\hline
$R_n$ &1 &2 &3 &5 &7 &11 &16 &24 &34 &54 \\
\hline
\end{tabular}
\medskip
\end{center}
\caption{$S_n$ and $R_n$ for small values of $n$.}
\end{table}

While it appears difficult to determine $S_n$ and $R_n$ explicitly for general $n$, we will be able to bound them from above and to provide a construction that matches the bound up to a constant factor.

\section{The asymptotic order of $S_n$ and $R_n$}

Our approach will consist of the following steps:
\begin{itemize}
\item Provide a construction that yields trees with ``many'' nonisomorphic subtrees,
\item Bound $S_n$ in terms of $R_n$,
\item Prove an upper bound on $R_n$.
\end{itemize}

The three statements of Proposition~\ref{prop:main} below correspond to those three steps. In order to prove the upper bound on $R_n$, we will require the following simple lemma.

\begin{lemma}\
\begin{enumerate}
\item[1.] Let $T$ be a rooted tree, and suppose that $T$ is the union of two rooted trees $R_1$ and $R_2$ (nontrivial, i.e. of order at least $2$) that only share the root. Then we have the inequality
\begin{equation}\label{eq:upper_bound1}
\nr(T) \leq \nr(R_1) \nr(R_2) - 1.
\end{equation}
\item[2.] Let $T_1,T_2,\ldots,T_d$ be the root branches (in other words, the components that remain when the root is removed) of a rooted tree $T$, endowed with their natural roots. Then the inequality
\begin{equation}\label{eq:upper_bound2}
\nr(T) \leq \prod_{j=1}^d (\nr(T_j) + 1)
\end{equation}
holds.
\end{enumerate}
\end{lemma}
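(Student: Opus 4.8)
The plan is to treat both parts with the same device: exhibit an explicit surjection from a product set onto the set of isomorphism types of rooted subtrees of $T$ that contain the root, and — for part~1 — pin down one unavoidable collision that gives the extra $-1$.

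For part~1, write $\mathcal{T}_i$ for the set of isomorphism types of rooted subtrees of $R_i$ containing the root (so $|\mathcal{T}_i| = \nr(R_i)$), and let $\mathcal{T}$ be the analogous set for $T$. Since $T$ is a tree and $R_1,R_2$ are subtrees of it meeting only in the root $r$, any rooted subtree $S$ of $T$ with $r\in S$ splits as $S=(S\cap R_1)\cup(S\cap R_2)$, where each $S\cap R_i$ is again a rooted subtree of $R_i$ containing $r$ (intersections of subtrees in a tree are subtrees) and the two pieces meet only in $r$. Conversely, gluing a rooted subtree of $R_1$ and one of $R_2$ at the root yields a rooted subtree of $T$ containing $r$. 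Passing to isomorphism types, this produces a well-defined map $g\colon \mathcal{T}_1\times\mathcal{T}_2\to\mathcal{T}$ — send $(a,b)$ to the isomorphism type obtained by identifying the roots of representatives of $a$ and $b$ — and the decomposition above shows $g$ is surjective, so $\nr(T)=|\mathcal{T}|\le \nr(R_1)\nr(R_2)$. To gain the $-1$, I would note that $g$ is never injective here: because $R_1$ and $R_2$ are nontrivial, the root of each $R_i$ has a neighbour, so $\mathcal{T}_i$ contains both the one-vertex type $\rho_i$ and the two-vertex path type $\pi_i$; then $g(\pi_1,\rho_2)$ and $g(\rho_1,\pi_2)$ are both the two-vertex path, while $(\pi_1,\rho_2)\neq(\rho_1,\pi_2)$. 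A non-injective surjection out of a set of size $\nr(R_1)\nr(R_2)$ has image of size at most $\nr(R_1)\nr(R_2)-1$, which is \eqref{eq:upper_bound1}.

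For part~2 the same idea is cleaner and needs no correction term. Deleting the root, a rooted subtree $S$ of $T$ containing the root is exactly the result of choosing, for each $j$, either nothing from $T_j$ or a rooted subtree of $T_j$ containing the root of $T_j$, and then reattaching the chosen pieces to the root; the isomorphism type of $S$ depends only on the chosen types. This gives a surjection $\prod_{j=1}^{d}\bigl(\{\ast\}\cup\mathcal{T}_j\bigr)\to\mathcal{T}$, where $\mathcal{T}_j$ collects the isomorphism types of rooted subtrees of $T_j$ containing its root and $\ast$ records the empty choice. Since $|\{\ast\}\cup\mathcal{T}_j|=\nr(T_j)+1$, inequality \eqref{eq:upper_bound2} follows at once.

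The only genuinely nontrivial point — and the step I expect to need the most care in writing up — is the $-1$ in part~1: one must observe that, although $g$ can fail to be injective for many reasons, there is a concrete collision that is always present, coming from the smallest nontrivial subtree, and this is precisely where the hypothesis $|R_i|\ge 2$ is used. The rest is routine bookkeeping about subtrees of trees and their intersections, so I anticipate no further obstacles.
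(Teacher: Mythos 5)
Your proposal is correct and follows essentially the same route as the paper: decompose each root-containing subtree of $T$ at the root into pieces from $R_1$ and $R_2$ (respectively from the branches $T_j$, allowing the empty piece), pass to isomorphism types, and obtain the $-1$ in part~1 from the unavoidable collision on the two-vertex subtree, which is exactly the paper's double-counting observation. The only difference is presentational: you phrase it as an explicit non-injective surjection, while the paper states the same counting argument informally.
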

\begin{proof}\
\begin{enumerate}
\item[1.] Note that each subtree $S$ of $T$ that contains the subtree decomposes naturally into a subtree $S_1$ of $R_1$ and a subtree of $S_2$ of $R_2$, each containing the respective root. Two pairs of subtrees $(S_1,S_2)$ and $(S_1',S_2')$ of subtrees such that $S_1$ is isomorphic to $S_1'$ and $S_2$ is isomorphic to $S_2'$ induce isomorphic subtrees of $T$. Therefore, we clearly have
$$\nr(T) \leq \nr(R_1) \nr(R_2).$$
To show that even strict inequality must hold for nontrivial trees $R_1$ and $R_2$ (which implies the desired statement), note that the two-vertex subtree is counted twice in this argument: once as a subtree of $R_1$, once as a subtree of $R_2$ (of course, this might also apply to other subtrees). Thus we obtain~\eqref{eq:upper_bound1}.
\item[2.] The argument is analogous to the first part: every subtree of $T$ induces a subtree that contains the root or the empty set in each $T_j$. The inequality~\eqref{eq:upper_bound2} follows immediately (and it is generally strict because subtrees are counted repeatedly).
\end{enumerate}
\end{proof}

\begin{prop}\label{prop:main}
The following inequalities hold for all $n \geq 1$:
\begin{enumerate}
\item[1.] $S_n \geq 2 \cdot 5^{n/4-2}$,
\item[2.] $S_n \leq R_n + 3 \cdot 2^{n/2-1}$,
\item[3.] $R_n \leq 5^{n/4}$.
\end{enumerate}
\end{prop}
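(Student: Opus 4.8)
The plan is to prove the three parts separately, since they are of quite different character: part~2 is a clean ``localization'' argument, part~1 asks for an explicit construction, and part~3 — the upper bound on $R_n$ — is where the real difficulty lies and is the statement I expect to fight hardest for. I will take them in that order of increasing difficulty.

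For part~2 I would localize at a centroid. Let $T$ have $n$ vertices and let $c$ be a centroid, so every component of $T-c$ has at most $\lfloor n/2\rfloor$ vertices. Split the subtrees of $T$ into those containing $c$ and those avoiding $c$. For the first group, rooting $T$ at $c$ shows that each such subtree is a rooted subtree containing the root; since an isomorphism of rooted trees is in particular an isomorphism of unrooted trees, this group contributes at most $\nr(T)$ (with $T$ rooted at $c$), hence at most $R_n$, isomorphism classes. For the second group, a subtree avoiding $c$ lies inside a single branch $B$ of $T-c$, so the number of isomorphism classes here is at most the total number of non-empty subtrees of the branches, $\sum_B(\text{subtrees of }B)$. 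Using the bound $2^{|B|-1}+1$ on the number of subtrees of a tree on $|B|$ vertices (as mentioned in the introduction), together with $|B|\le \lfloor n/2\rfloor$ and $\sum_B|B| = n-1$, convexity pushes the dominant term $\sum_B 2^{|B|-1}$ onto two branches of sizes $\lfloor n/2\rfloor$ and $n-1-\lfloor n/2\rfloor$; a careful accounting of these terms and of the lower-order $+1$'s yields exactly $3\cdot 2^{n/2-1}$. Adding the two groups gives $\ns(T)\le R_n + 3\cdot 2^{n/2-1}$, as required, and the point of the argument is that this error term is asymptotically smaller than $5^{n/4}$.

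For part~1 the engine is a single $4$-vertex gadget $G$: the rooted tree whose root has a leaf child and one further child carrying a single pendant vertex. A direct enumeration gives $\nr(G)=5=5^{|G|/4}$, so $G$ exactly saturates the target rate of ``factor $5$ per $4$ vertices.'' I would then build a recursive family $A_k$ by stacking copies of $G$ along a spine, each copy using four new vertices, so that $|A_k| = 4k + O(1)$ and the choices made in the successive gadgets can be recovered from the shape of a subtree; this yields $\nr(A_k)\ge c\cdot 5^k$ for an absolute constant $c>0$. To pass from this rooted bound to the unrooted quantity $\ns$, I would make the root recognizable by attaching a short, rigid distinguishing appendage, so that among a suitable family of subtrees the unrooted isomorphism type determines the rooted one; the constant-size overhead and the constant $c$ are precisely what get absorbed into the factor $2$ and the shift $5^{-2}$ appearing in $S_n\ge 2\cdot 5^{n/4-2}$.

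Part~3 is the crux, and here I would argue by strong induction on $n$, settling the small cases $n\le 4$ from the table. For the step, let $r$ be the root with branches $T_1,\dots,T_d$. If $d=1$ then $\nr(T)=1+\nr(T_1)\le 1+5^{(n-1)/4}\le 5^{n/4}$ for $n\ge 3$, which is painless. The difficulty is entirely in the branching cases, and it is genuine, because the clean tools overshoot: applying the first part of the Lemma with a balanced split only gives $\nr(T)\le 5^{(n+1)/4}-1$, which already exceeds $5^{n/4}$; and even the refined multiset bound $\nr(T)\le \prod_\tau \binom{\nr(\tau)+m_\tau}{m_\tau}$ (grouping branches by isomorphism type $\tau$ with multiplicity $m_\tau$), while it correctly tames the star, returns $6>5$ on the gadget $G$ itself. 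The reason is that small subtrees carry a great deal of slack relative to $5^{n/4}$, and that isomorphic combined subtrees can arise from non-isomorphic choices in different branches — exactly the cross-branch collisions that make $G$ extremal. The main obstacle, therefore, is to capture these coincidences quantitatively. I expect to need a case analysis on the root degree and on the isomorphism types of the small branches, feeding in the exact values of $\nr$ for the few smallest trees rather than the lossy inductive bound, and charging every ``cheap'' local move against the collisions it is forced to create, so that the amortized per-vertex growth never exceeds $5^{1/4}$. Verifying the resulting inequality — which must be exactly tight at $G$ — is the technical heart of the whole proposition.
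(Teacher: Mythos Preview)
Your parts~1 and~2 are essentially the paper's arguments. The centroid split for part~2 is identical (the paper uses the cruder branch bound $2^{|B|}$ rather than $2^{|B|-1}+1$, but this is cosmetic), and your gadget $G$ for part~1 is exactly the paper's $E_5$; the paper's construction $C_n$ is precisely a spine of $E_5$'s with a small asymmetric appendage, and the constants in $2\cdot 5^{n/4-2}$ come out of the same count of ``full-backbone'' subtrees that your sketch points toward.

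For part~3, however, there is a real missing idea. You correctly observe that the two natural tools---the product bound from the Lemma and the multiset refinement for repeated branches---both overshoot, and you correctly expect a case analysis on small branches. What you do not have is the mechanism that closes the recursion. The paper's device is to \emph{strengthen the inductive hypothesis} to $\nr(T)\le 5^{n/4}-1$, with a finite explicit list of exceptional rooted trees (ten of them, including $E_5$ and its fragments). The point of the extra ``$-1$'' is that if some branch $T_1$ is non-exceptional, then
\[
\nr(T)\le(\nr(T_1)+1)\,\nr(R_2)-1\le\bigl(5^{|T_1|/4}-1+1\bigr)\,5^{|R_2|/4}-1=5^{n/4}-1,
\]
so the induction goes through immediately in that case. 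Only when \emph{every} branch is on the short exceptional list does one need the finite case check you anticipate; the paper reduces this to a few structural observations (four leaves, three $E_2$'s, two copies of any larger exception, or two ``large'' exceptions already give a sub-collection $R_1$ with $\nr(R_1)\le 5^{(|R_1|-1)/4}$) and then verifies the remaining bounded set of possibilities directly. Your ``charging collisions'' idea is in the right spirit, but without the strengthened hypothesis you have no way to dispose of the generic case where a branch is large and unstructured; that is the gap.
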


\begin{proof}\ 

\begin{enumerate}
\item[1.] For $n < 8$, the stated inequality is essentially trivial, since it provides a lower bound of $1$ ($n < 7$) or $2$ ($n=7$). For $n \geq 8$, we obtain the lower bound on $S_n$ by an explicit construction (see Figure~\ref{fig:construction}). Its core is formed by a path of $m = \lfloor \frac{n}{4} \rfloor -2$ vertices, to each of which we attach three vertices: one leaf and two others forming a path of length $2$. Depending on the residue class of $n$ modulo $4$, we add between $8$ and $11$ additional vertices as indicated in the figure to obtain an $n$-vertex tree that we denote by $C_n$.

\begin{figure}[htbp]
\begin{center}
\begin{tikzpicture}

	\node at (-2,1.2) {$n \equiv 3 \bmod 4$:};

        \node[fill=black,circle,inner sep=2pt]  at (-3,0) {};
        \node[fill=black,circle,inner sep=2pt]  at (-2,0) {};
        \node[fill=black,circle,inner sep=2pt]  at (-1,0) {};

        \node[fill=black,circle,inner sep=2pt]  at (0,0) {};
        \node[fill=black,circle,inner sep=2pt]  at (1,0) {};
        \node[fill=black,circle,inner sep=2pt]  at (2,0) {};
        \node[fill=black,circle,inner sep=2pt]  at (6,0) {};
        \node[fill=black,circle,inner sep=2pt]  at (7,0) {};

        \node[fill=black,circle,inner sep=2pt]  at (8,0) {};
        \node[fill=black,circle,inner sep=2pt]  at (9,0) {};
        \node[fill=black,circle,inner sep=2pt]  at (10,0) {};

        \node[fill=black,circle,inner sep=2pt]  at (-2,.5) {};
        \node[fill=black,circle,inner sep=2pt]  at (-.7,.4) {};
        \node[fill=black,circle,inner sep=2pt]  at (-1.3,.4) {};
        \node[fill=black,circle,inner sep=2pt]  at (7.7,.4) {};
        \node[fill=black,circle,inner sep=2pt]  at (8.3,.4) {};

	\node at (0,0.3) {$v_1$};
	\node at (1,0.3) {$v_2$};
	\node at (2,0.3) {$v_3$};
	\node at (6,0.3) {$v_{m-1}$};
	\node at (7,0.3) {$v_m$};

	\node at (-3,-0.4) {$x_1$};
	\node at (-2,-0.4) {$x_2$};
	\node at (-1,-0.4) {$x_3$};

	\node at (8,-0.4) {$y_1$};
	\node at (9,-0.4) {$y_2$};
	\node at (10,-0.4) {$y_3$};

	\node at (-2.3,0.5) {$z$};

	\draw (-3,0)--(2,0);
	\draw [dotted] (2,0)--(6,0);
	\draw (6,0)--(10,0);

	\draw (-2,0)--(-2,.5);
	\draw (-1.3,.4)--(-1,0)--(-.7,.4);
	\draw (8.3,.4)--(8,0)--(7.7,.4);

        \node[fill=black,circle,inner sep=2pt]  at (-0.3,-0.4) {};
        \node[fill=black,circle,inner sep=2pt]  at (0.3,-0.4) {};
        \node[fill=black,circle,inner sep=2pt]  at (-0.3,-0.9) {};
	\draw (-0.3,-0.9)--(-0.3,-0.4)--(0,0)--(0.3,-0.4);

        \node[fill=black,circle,inner sep=2pt]  at (0.7,-0.4) {};
        \node[fill=black,circle,inner sep=2pt]  at (1.3,-0.4) {};
        \node[fill=black,circle,inner sep=2pt]  at (0.7,-0.9) {};
	\draw (0.7,-0.9)--(0.7,-0.4)--(1,0)--(1.3,-0.4);

        \node[fill=black,circle,inner sep=2pt]  at (1.7,-0.4) {};
        \node[fill=black,circle,inner sep=2pt]  at (2.3,-0.4) {};
        \node[fill=black,circle,inner sep=2pt]  at (1.7,-0.9) {};
	\draw (1.7,-0.9)--(1.7,-0.4)--(2,0)--(2.3,-0.4);

        \node[fill=black,circle,inner sep=2pt]  at (5.7,-0.4) {};
        \node[fill=black,circle,inner sep=2pt]  at (6.3,-0.4) {};
        \node[fill=black,circle,inner sep=2pt]  at (5.7,-0.9) {};
	\draw (5.7,-0.9)--(5.7,-0.4)--(6,0)--(6.3,-0.4);

        \node[fill=black,circle,inner sep=2pt]  at (6.7,-0.4) {};
        \node[fill=black,circle,inner sep=2pt]  at (7.3,-0.4) {};
        \node[fill=black,circle,inner sep=2pt]  at (6.7,-0.9) {};
	\draw (6.7,-0.9)--(6.7,-0.4)--(7,0)--(7.3,-0.4);

	\node at (-2,4.2) {$n \equiv 2 \bmod 4$:};

        \node[fill=black,circle,inner sep=2pt]  at (-3,3) {};
        \node[fill=black,circle,inner sep=2pt]  at (-2,3) {};
        \node[fill=black,circle,inner sep=2pt]  at (-1,3) {};

        \node[fill=black,circle,inner sep=2pt]  at (0,3) {};
        \node[fill=black,circle,inner sep=2pt]  at (1,3) {};
        \node[fill=black,circle,inner sep=2pt]  at (2,3) {};
        \node[fill=black,circle,inner sep=2pt]  at (6,3) {};
        \node[fill=black,circle,inner sep=2pt]  at (7,3) {};

        \node[fill=black,circle,inner sep=2pt]  at (8,3) {};
        \node[fill=black,circle,inner sep=2pt]  at (9,3) {};
        \node[fill=black,circle,inner sep=2pt]  at (10,3) {};

        \node[fill=black,circle,inner sep=2pt]  at (-2,3.5) {};
        \node[fill=black,circle,inner sep=2pt]  at (-1.3,3.4) {};
        \node[fill=black,circle,inner sep=2pt]  at (-.7,3.4) {};
        \node[fill=black,circle,inner sep=2pt]  at (8,3.5) {};

	\node at (0,3.3) {$v_1$};
	\node at (1,3.3) {$v_2$};
	\node at (2,3.3) {$v_3$};
	\node at (6,3.3) {$v_{m-1}$};
	\node at (7,3.3) {$v_m$};

	\node at (-3,2.6) {$x_1$};
	\node at (-2,2.6) {$x_2$};
	\node at (-1,2.6) {$x_3$};

	\node at (8,2.6) {$y_1$};
	\node at (9,2.6) {$y_2$};
	\node at (10,2.6) {$y_3$};

	\node at (-2.3,3.5) {$z$};

	\draw (-3,3)--(2,3);
	\draw [dotted] (2,3)--(6,3);
	\draw (6,3)--(10,3);

	\draw (-2,3)--(-2,3.5);
	\draw (-1.3,3.4)--(-1,3)--(-.7,3.4);
	\draw (8,3)--(8,3.5);

        \node[fill=black,circle,inner sep=2pt]  at (-0.3,2.6) {};
        \node[fill=black,circle,inner sep=2pt]  at (0.3,2.6) {};
        \node[fill=black,circle,inner sep=2pt]  at (-0.3,2.1) {};
	\draw (-0.3,2.1)--(-0.3,2.6)--(0,3)--(0.3,2.6);

        \node[fill=black,circle,inner sep=2pt]  at (0.7,2.6) {};
        \node[fill=black,circle,inner sep=2pt]  at (1.3,2.6) {};
        \node[fill=black,circle,inner sep=2pt]  at (0.7,2.1) {};
	\draw (0.7,2.1)--(0.7,2.6)--(1,3)--(1.3,2.6);

        \node[fill=black,circle,inner sep=2pt]  at (1.7,2.6) {};
        \node[fill=black,circle,inner sep=2pt]  at (2.3,2.6) {};
        \node[fill=black,circle,inner sep=2pt]  at (1.7,2.1) {};
	\draw (1.7,2.1)--(1.7,2.6)--(2,3)--(2.3,2.6);

        \node[fill=black,circle,inner sep=2pt]  at (5.7,2.6) {};
        \node[fill=black,circle,inner sep=2pt]  at (6.3,2.6) {};
        \node[fill=black,circle,inner sep=2pt]  at (5.7,2.1) {};
	\draw (5.7,2.1)--(5.7,2.6)--(6,3)--(6.3,2.6);

        \node[fill=black,circle,inner sep=2pt]  at (6.7,2.6) {};
        \node[fill=black,circle,inner sep=2pt]  at (7.3,2.6) {};
        \node[fill=black,circle,inner sep=2pt]  at (6.7,2.1) {};
	\draw (6.7,2.1)--(6.7,2.6)--(7,3)--(7.3,2.6);

	\node at (-2,7.2) {$n \equiv 1 \bmod 4$:};

        \node[fill=black,circle,inner sep=2pt]  at (-3,6) {};
        \node[fill=black,circle,inner sep=2pt]  at (-2,6) {};
        \node[fill=black,circle,inner sep=2pt]  at (-1,6) {};

        \node[fill=black,circle,inner sep=2pt]  at (0,6) {};
        \node[fill=black,circle,inner sep=2pt]  at (1,6) {};
        \node[fill=black,circle,inner sep=2pt]  at (2,6) {};
        \node[fill=black,circle,inner sep=2pt]  at (6,6) {};
        \node[fill=black,circle,inner sep=2pt]  at (7,6) {};

        \node[fill=black,circle,inner sep=2pt]  at (8,6) {};
        \node[fill=black,circle,inner sep=2pt]  at (9,6) {};
        \node[fill=black,circle,inner sep=2pt]  at (10,6) {};

        \node[fill=black,circle,inner sep=2pt]  at (-2,6.5) {};
        \node[fill=black,circle,inner sep=2pt]  at (-1,6.5) {};
        \node[fill=black,circle,inner sep=2pt]  at (8,6.5) {};

	\node at (0,6.3) {$v_1$};
	\node at (1,6.3) {$v_2$};
	\node at (2,6.3) {$v_3$};
	\node at (6,6.3) {$v_{m-1}$};
	\node at (7,6.3) {$v_m$};

	\node at (-3,5.6) {$x_1$};
	\node at (-2,5.6) {$x_2$};
	\node at (-1,5.6) {$x_3$};

	\node at (8,5.6) {$y_1$};
	\node at (9,5.6) {$y_2$};
	\node at (10,5.6) {$y_3$};

	\draw (-3,6)--(2,6);
	\draw [dotted] (2,6)--(6,6);
	\draw (6,6)--(10,6);

	\draw (-2,6)--(-2,6.5);
	\draw (-1,6)--(-1,6.5);
	\draw (8,6)--(8,6.5);

	\node at (-2.3,6.5) {$z$};

        \node[fill=black,circle,inner sep=2pt]  at (-0.3,5.6) {};
        \node[fill=black,circle,inner sep=2pt]  at (0.3,5.6) {};
        \node[fill=black,circle,inner sep=2pt]  at (-0.3,5.1) {};
	\draw (-0.3,5.1)--(-0.3,5.6)--(0,6)--(0.3,5.6);

        \node[fill=black,circle,inner sep=2pt]  at (0.7,5.6) {};
        \node[fill=black,circle,inner sep=2pt]  at (1.3,5.6) {};
        \node[fill=black,circle,inner sep=2pt]  at (0.7,5.1) {};
	\draw (0.7,5.1)--(0.7,5.6)--(1,6)--(1.3,5.6);

        \node[fill=black,circle,inner sep=2pt]  at (1.7,5.6) {};
        \node[fill=black,circle,inner sep=2pt]  at (2.3,5.6) {};
        \node[fill=black,circle,inner sep=2pt]  at (1.7,5.1) {};
	\draw (1.7,5.1)--(1.7,5.6)--(2,6)--(2.3,5.6);

        \node[fill=black,circle,inner sep=2pt]  at (5.7,5.6) {};
        \node[fill=black,circle,inner sep=2pt]  at (6.3,5.6) {};
        \node[fill=black,circle,inner sep=2pt]  at (5.7,5.1) {};
	\draw (5.7,5.1)--(5.7,5.6)--(6,6)--(6.3,5.6);

        \node[fill=black,circle,inner sep=2pt]  at (6.7,5.6) {};
        \node[fill=black,circle,inner sep=2pt]  at (7.3,5.6) {};
        \node[fill=black,circle,inner sep=2pt]  at (6.7,5.1) {};
	\draw (6.7,5.1)--(6.7,5.6)--(7,6)--(7.3,5.6);

	\node at (-2,10.2) {$n \equiv 0 \bmod 4$:};

        \node[fill=black,circle,inner sep=2pt]  at (-3,9) {};
        \node[fill=black,circle,inner sep=2pt]  at (-2,9) {};
        \node[fill=black,circle,inner sep=2pt]  at (-1,9) {};

        \node[fill=black,circle,inner sep=2pt]  at (0,9) {};
        \node[fill=black,circle,inner sep=2pt]  at (1,9) {};
        \node[fill=black,circle,inner sep=2pt]  at (2,9) {};
        \node[fill=black,circle,inner sep=2pt]  at (6,9) {};
        \node[fill=black,circle,inner sep=2pt]  at (7,9) {};

        \node[fill=black,circle,inner sep=2pt]  at (8,9) {};
        \node[fill=black,circle,inner sep=2pt]  at (9,9) {};
        \node[fill=black,circle,inner sep=2pt]  at (10,9) {};

        \node[fill=black,circle,inner sep=2pt]  at (-2,9.5) {};
        \node[fill=black,circle,inner sep=2pt]  at (-1,9.5) {};

	\node at (0,9.3) {$v_1$};
	\node at (1,9.3) {$v_2$};
	\node at (2,9.3) {$v_3$};
	\node at (6,9.3) {$v_{m-1}$};
	\node at (7,9.3) {$v_m$};

	\node at (-3,8.6) {$x_1$};
	\node at (-2,8.6) {$x_2$};
	\node at (-1,8.6) {$x_3$};

	\node at (8,8.6) {$y_1$};
	\node at (9,8.6) {$y_2$};
	\node at (10,8.6) {$y_3$};

	\node at (-2.3,9.5) {$z$};

	\draw (-3,9)--(2,9);
	\draw [dotted] (2,9)--(6,9);
	\draw (6,9)--(10,9);

	\draw (-2,9)--(-2,9.5);
	\draw (-1,9)--(-1,9.5);

        \node[fill=black,circle,inner sep=2pt]  at (-0.3,8.6) {};
        \node[fill=black,circle,inner sep=2pt]  at (0.3,8.6) {};
        \node[fill=black,circle,inner sep=2pt]  at (-0.3,8.1) {};
	\draw (-0.3,8.1)--(-0.3,8.6)--(0,9)--(0.3,8.6);

        \node[fill=black,circle,inner sep=2pt]  at (0.7,8.6) {};
        \node[fill=black,circle,inner sep=2pt]  at (1.3,8.6) {};
        \node[fill=black,circle,inner sep=2pt]  at (0.7,8.1) {};
	\draw (0.7,8.1)--(0.7,8.6)--(1,9)--(1.3,8.6);

        \node[fill=black,circle,inner sep=2pt]  at (1.7,8.6) {};
        \node[fill=black,circle,inner sep=2pt]  at (2.3,8.6) {};
        \node[fill=black,circle,inner sep=2pt]  at (1.7,8.1) {};
	\draw (1.7,8.1)--(1.7,8.6)--(2,9)--(2.3,8.6);

        \node[fill=black,circle,inner sep=2pt]  at (5.7,8.6) {};
        \node[fill=black,circle,inner sep=2pt]  at (6.3,8.6) {};
        \node[fill=black,circle,inner sep=2pt]  at (5.7,8.1) {};
	\draw (5.7,8.1)--(5.7,8.6)--(6,9)--(6.3,8.6);

        \node[fill=black,circle,inner sep=2pt]  at (6.7,8.6) {};
        \node[fill=black,circle,inner sep=2pt]  at (7.3,8.6) {};
        \node[fill=black,circle,inner sep=2pt]  at (6.7,8.1) {};
	\draw (6.7,8.1)--(6.7,8.6)--(7,9)--(7.3,8.6);
\end{tikzpicture}
\end{center}
\caption{A construction that yields a lower bound.}\label{fig:construction}
\end{figure}
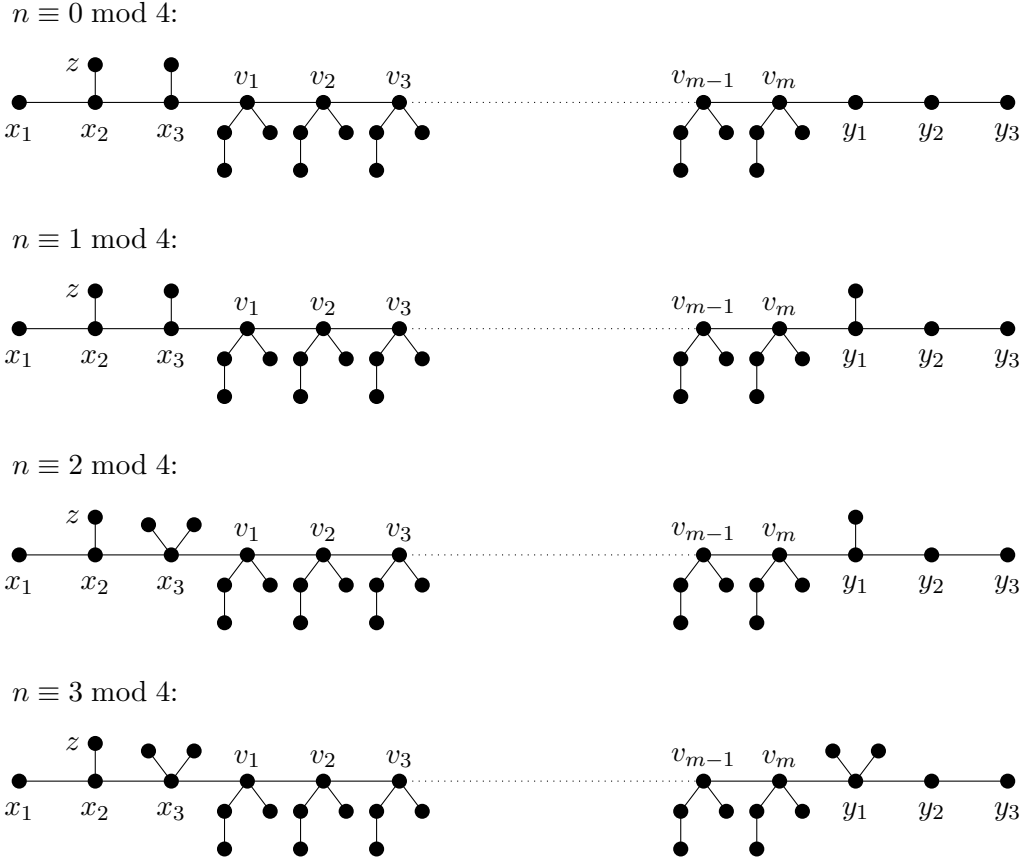

It is clear that $S_n \geq \ns(C_n)$, so let us estimate $\ns(C_n)$. For a simple lower bound, we only consider subtrees that contain the entire ``backbone'' consisting of $v_1,v_2,\ldots,v_m$ and $x_1,x_2,x_3,y_1,y_2,y_3$, as well as the vertex $z$. Note that the paths from $x_1$ to $y_3$ and from $z$ to $y_3$ are always the only diameters of these subtrees, so they are uniquely determined by the parts ``dangling'' from $x_3,v_1,v_2,\ldots,v_m,y_1$. Including $z$ in all subtrees we are counting guarantees that there is no double-counting due to mirror symmetry.

The leaf or leaves attached to $x_3$ can be included in such a subtree or not, which gives us $2$ possibilities for $n \equiv 0,1 \bmod 4$ and $3$ possibilities for $n \equiv 2,3 \bmod 4$. Likewise, the leaf or leaves attached to $y_1$ can be included or not, giving us $1,2$ or $3$ possible options, depending on the residue class again. Finally, for each of the vertices $v_1,v_2,\ldots,v_m$, we have $5$ distinct ways of extending the subtree by adding a subset of the three vertices attached to it. Altogether, this gives us
$$\ns(C_n) \geq \begin{cases} 
2 \cdot 5^m & n \equiv 0 \bmod 4, \\
4 \cdot 5^m & n \equiv 1 \bmod 4, \\
6 \cdot 5^m & n \equiv 2 \bmod 4, \\
9 \cdot 5^m & n \equiv 3 \bmod 4,
\end{cases}$$
where $m=  \lfloor \frac{n}{4} \rfloor -2$. It is easy to verify that $\ns(C_n) \geq 2 \cdot 5^{n/4-2}$ in each of the four cases, which completes our proof.

\item[2.] For the inequality between $R_n$ and $S_n$, consider a tree $T$ of order $n$ for which the maximum $S_n$ is attained, i.e., $\ns(T) = S_n$. Let $v$ be a centroid vertex of $T$, which is a vertex for which the sum of the distances to all other vertices is minimized. It is well known that none of the centroid branches (the connected components that remain when the centroid is removed) can contain more than $n/2$ vertices, since one could then decrease the sum of distances by moving one step towards the largest branch (see Zelinka's paper \cite{zelinka}). Let $T_1,T_2,\ldots,T_k$ be the centroid branches, so that $|T_1| + |T_2| + \cdots + |T_k| = n-1$. The total number of subtrees that do not contain the centroid $v$ is clearly at most
$$\sum_{i=1}^k 2^{|T_i|}.$$
Some of these subtrees may of course be isomorphic, but we are only interested in an upper bound. Note that this sum increases if we transfer vertices from any of the branches to a branch with the same or greater number of vertices. Therefore, it reaches its maximum when there are only two branches, each containing either $(n-1)/2$ vertices (if $n$ is odd) or $n/2$ and $n/2-1$ vertices respectively (if $n$ is even). It follows that at most $2^{n/2} + 2^{n/2-1} = 3 \cdot 2^{n/2-1}$ of the subtrees of $T$ do not contain the centroid $v$. The number of distinct nonisomorphic subtrees containing $v$ is clearly at most $R_n$ by definition, so this completes the proof.

\item[3.] For the proof of the third statement, we use induction on $n$ to prove a minimally stronger inequality, which makes the inductive argument simpler. Specifically, we claim that for a rooted tree $T$ of order $n$,
\begin{equation}\label{eq:aux_ineq}
\nr(T) \leq 5^{n/4} - 1,
\end{equation}
unless $T$ is one of ten exceptional trees (denoted $E_1,\ldots,E_{10}$ for future reference) that are shown in Figure~\ref{fig:except}. Note, however, that all these trees still satisfy inequality~\eqref{eq:aux_ineq} without the final $-1$, which is what we actually want to obtain.

\begin{figure}[htbp]
\begin{center}
\begin{tikzpicture}
        \node[fill=black,rectangle,inner sep=2pt]  at (0,0) {};

	\node at (0,-1) {$\nr(E_1) = 1$};

        \node[fill=black,rectangle,inner sep=2pt]  at (2.5,0) {};
        \node[fill=black,circle,inner sep=1pt]  at (2.5,1) {};
	\draw (2.5,0)--(2.5,1);

	\node at (2.5,-1) {$\nr(E_2) = 2$};

        \node[fill=black,rectangle,inner sep=2pt]  at (5,0) {};
        \node[fill=black,circle,inner sep=1pt]  at (5,1) {};
        \node[fill=black,circle,inner sep=1pt]  at (5,2) {};
	\draw (5,0)--(5,2);

	\node at (5,-1) {$\nr(E_3) = 3$};

        \node[fill=black,rectangle,inner sep=2pt]  at (7.5,0) {};
        \node[fill=black,circle,inner sep=1pt]  at (7,1) {};
        \node[fill=black,circle,inner sep=1pt]  at (8,1) {};
	\draw (7,1)--(7.5,0)--(8,1);

	\node at (7.5,-1) {$\nr(E_4) = 3$};

        \node[fill=black,rectangle,inner sep=2pt]  at (10,0) {};
        \node[fill=black,circle,inner sep=1pt]  at (9.5,1) {};
        \node[fill=black,circle,inner sep=1pt]  at (10.5,1) {};
        \node[fill=black,circle,inner sep=1pt]  at (10.5,2) {};
	\draw (9.5,1)--(10,0)--(10.5,1)--(10.5,2);

	\node at (10,-1) {$\nr(E_5) = 5$};

        \node[fill=black,rectangle,inner sep=2pt]  at (0,-5) {};
        \node[fill=black,circle,inner sep=1pt]  at (-.5,-4) {};
        \node[fill=black,circle,inner sep=1pt]  at (.5,-4) {};
        \node[fill=black,circle,inner sep=1pt]  at (-1,-3) {};
        \node[fill=black,circle,inner sep=1pt]  at (0,-3) {};
	\draw (.5,-4)--(0,-5)--(-1,-3);
	\draw (-.5,-4)--(0,-3);

	\node at (0,-6) {$\nr(E_6) = 7$};

        \node[fill=black,rectangle,inner sep=2pt]  at (2.5,-5) {};
        \node[fill=black,circle,inner sep=1pt]  at (2,-4) {};
        \node[fill=black,circle,inner sep=1pt]  at (3,-4) {};
        \node[fill=black,circle,inner sep=1pt]  at (2,-3) {};
        \node[fill=black,circle,inner sep=1pt]  at (2,-2) {};
	\draw (3,-4)--(2.5,-5)--(2,-4)--(2,-2);

	\node at (2.5,-6) {$\nr(E_7) = 7$};

        \node[fill=black,rectangle,inner sep=2pt]  at (5,-5) {};
        \node[fill=black,circle,inner sep=1pt]  at (4.5,-4) {};
        \node[fill=black,circle,inner sep=1pt]  at (5,-4) {};
        \node[fill=black,circle,inner sep=1pt]  at (5.5,-4) {};
        \node[fill=black,circle,inner sep=1pt]  at (4.5,-3) {};
	\draw (5,-4)--(5,-5)--(4.5,-4)--(4.5,-3);
	\draw (5,-5)--(5.5,-4);

	\node at (5,-6) {$\nr(E_8) = 7$};

        \node[fill=black,rectangle,inner sep=2pt]  at (7.5,-5) {};
        \node[fill=black,circle,inner sep=1pt]  at (7,-4) {};
        \node[fill=black,circle,inner sep=1pt]  at (8,-4) {};
        \node[fill=black,circle,inner sep=1pt]  at (6.5,-3) {};
        \node[fill=black,circle,inner sep=1pt]  at (7.5,-3) {};
        \node[fill=black,circle,inner sep=1pt]  at (6.5,-2) {};
	\draw (8,-4)--(7.5,-5)--(6.5,-3)--(6.5,-2);
	\draw (7,-4)--(7.5,-3);

	\node at (7.5,-6) {$\nr(E_9) = 11$};

        \node[fill=black,rectangle,inner sep=2pt]  at (10,-5) {};
        \node[fill=black,circle,inner sep=1pt]  at (9.5,-4) {};
        \node[fill=black,circle,inner sep=1pt]  at (10,-4) {};
        \node[fill=black,circle,inner sep=1pt]  at (10.5,-4) {};
        \node[fill=black,circle,inner sep=1pt]  at (9,-3) {};
        \node[fill=black,circle,inner sep=1pt]  at (10,-3) {};
        \node[fill=black,circle,inner sep=1pt]  at (9,-2) {};
	\draw (10.5,-4)--(10,-5)--(9,-3)--(9,-2);
	\draw (9.5,-4)--(10,-3);
	\draw (10,-4)--(10,-5);

	\node at (10,-6) {$\nr(E_{10}) = 16$};

\end{tikzpicture}
\end{center}
\caption{The ten exceptional rooted trees.}\label{fig:except}
\end{figure}
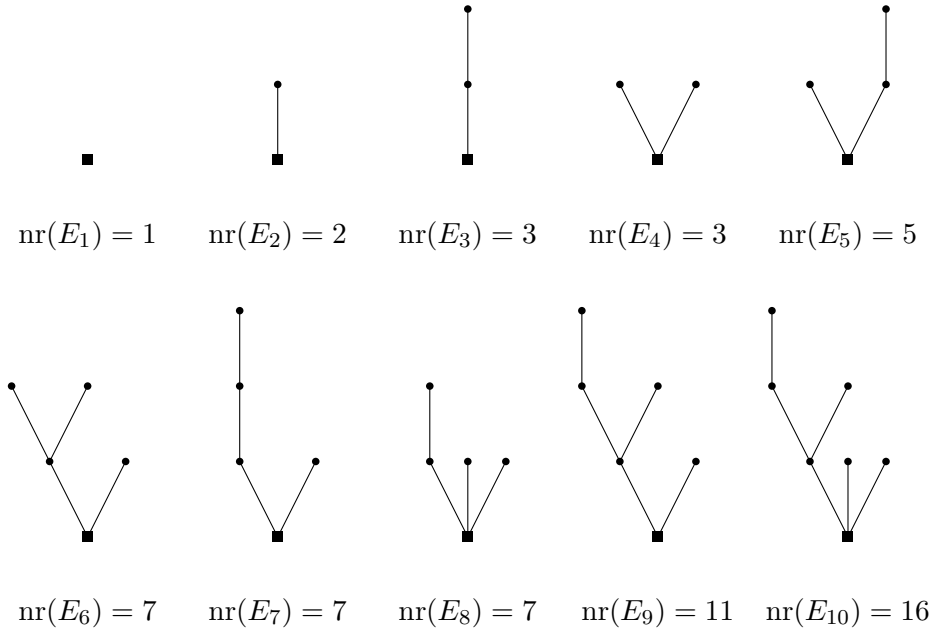

The statement can be verified directly for $n \leq 7$, so for the induction step, we consider a tree $T$ of order $n \geq 8$, and we denote its root by $r$ and its root branches by $T_1,T_2,\ldots,T_d$. Now consider the following cases:
\begin{itemize}
\item[\bf{Case 1:}] At least one of the branches (without loss of generality $T_1$) is not an exceptional tree. In this case, we can regard $T$ as the union of the rooted trees $R_1 = \{r\} \cup T_1$ and $R_2 = T \setminus T_1$, both rooted at $r$. If the tree $R_2$ is nontrivial (in other words, if $T$ has more than one branch), then we can apply \eqref{eq:upper_bound1} and the induction hypothesis to obtain
\begin{align*}
\nr(T) &\leq \nr(R_1) \nr(R_2) -1 = (\nr(T_1) + 1) \nr(R_2) -1  \\
&\leq (5^{|T_1|/4} - 1 + 1) \cdot 5^{|R_2|/4} - 1 \\
&= 5^{(|T_1|+|R_2|)/4} - 1 = 5^{n/4} - 1,
\end{align*}
which proves the desired inequality. If $T_1$ is the only branch, then we obtain from the induction hypothesis that
$$\nr(T) = 1 + \nr(T_1) \leq 1 + 5^{(n-1)/4} - 1 = 5^{(n-1)/4} \leq 5^{n/4} - 1.$$
\item[\bf{Case 2:}] We are left with the case that all branches are on the list of exceptional trees. Suppose that some 
set of branches (without loss of generality $T_1,T_2,\ldots,T_k$), together with the root of $T$, form a rooted tree $R_1$ such that $\nr(R_1) \leq 5^{(|R_1|-1)/4}$. In this case, we can apply the same argument as in the previous case: if $R_1$ is already all of $T$, we are done immediately; otherwise, set $R_2 = T \setminus R_1 \cup \{r\}$ and apply~\eqref{eq:upper_bound1} in combination with the induction hypothesis as before. This means that we are done if any of the following cases applies:
\begin{itemize}
\item At least four branches are single vertices (copies of $E_1$): in this case, $|R_1| = 5$ and $\nr(R_1) = 5$.
\item At least three branches have order $2$ (copies of $E_2$): in this case, $|R_1| = 7$ and $\nr(R_1) = 10$.
\item At least two branches are identical copies of one of the exceptional trees $E_j$, where $j \in \{3,4,\ldots,10\}$: in this case, $|R_1| = 2|E_j|+1$ and $\nr(R_1) = (\nr(E_j)+1)(\nr(E_j)+2)/2$, since each subtree of $R_1$ that contains the root is obtained from an unordered pair of root-containing subtrees of $E_j$, or a single such subtree, or consists of the root only. For each $j$, the desired inequality is easily verified.
\item At least two of the branches belong to the set $\{E_6,E_7,E_8,E_9,E_{10}\}$ of ``large'' exceptional branches: in each of these cases, one verifies that the tree $R_1$ formed by the root and these two branches satisfies $\nr(R_1) \leq 5^{(|R_1|-1)/4}$.
\end{itemize}
This leaves us with $4 \cdot 3 \cdot 2^3 \cdot 6 = 576$ remaining cases (determined by how often each exceptional tree occurs as a branch: up to three copies of $E_1$, up to two copies of $E_2$, either one or no copy for each of $E_3,E_4,E_5$, and potentially one of $E_6,E_7,\ldots,E_{10}$), and these can be checked directly by a computer. 
\end{itemize}

\end{enumerate}
\end{proof}

Our main result follows immediately:

\begin{theo}
We have both $S_n = \Theta(5^{n/4})$ and $R_n = \Theta(5^{n/4})$.
\end{theo}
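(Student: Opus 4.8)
The plan is to deduce the theorem directly from Proposition~\ref{prop:main}, which already bundles the three ingredients we need: a lower bound construction, a transfer from $S_n$ to $R_n$, and an upper bound on $R_n$. There is essentially nothing new to prove; the task is to combine the three inequalities cleanly and observe that the various polynomial and bounded error terms are absorbed into the $\Theta$-notation.

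First I would handle the two upper bounds. Part~3 of Proposition~\ref{prop:main} gives $R_n \leq 5^{n/4}$ immediately, so $R_n = O(5^{n/4})$. For $S_n$, part~2 gives
$$S_n \leq R_n + 3 \cdot 2^{n/2-1} \leq 5^{n/4} + 3 \cdot 2^{n/2-1}.$$
Since $2^{1/2} = \sqrt{2} \approx 1.414 < 5^{1/4} \approx 1.495$, the term $2^{n/2}$ is exponentially smaller than $5^{n/4}$, so $3 \cdot 2^{n/2-1} = o(5^{n/4})$ and hence $S_n \leq (1+o(1)) 5^{n/4} = O(5^{n/4})$.

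Next I would handle the lower bounds. Part~1 of Proposition~\ref{prop:main} states $S_n \geq 2 \cdot 5^{n/4-2} = \frac{2}{25} \cdot 5^{n/4}$, so $S_n = \Omega(5^{n/4})$; combined with the upper bound this yields $S_n = \Theta(5^{n/4})$. For $R_n$, I would note that $\nr(T) \geq \ns(T)$ is false in general, but there is an easy direct comparison: given a tree $T$ attaining $S_n$, root it at any vertex $v$; every subtree of $T$ contains a \emph{unique} subtree of minimal size containing $v$ obtained by adding the path to $v$, but more simply, one can root the extremal tree $C_n$ (or a slight modification) at a suitable leaf and check that distinct nonisomorphic subtrees of the backbone-type remain distinct as rooted subtrees containing the root. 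Cleanest, though, is to invoke part~2 in reverse together with part~1: since $S_n \leq R_n + 3 \cdot 2^{n/2-1}$, we get $R_n \geq S_n - 3 \cdot 2^{n/2-1} \geq 2 \cdot 5^{n/4-2} - 3 \cdot 2^{n/2-1}$, and because $2^{n/2}$ is exponentially dominated by $5^{n/4}$ this is $\Omega(5^{n/4})$. Together with $R_n \leq 5^{n/4}$ this gives $R_n = \Theta(5^{n/4})$.

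The only ``obstacle'' is the elementary arithmetic check that $\sqrt{2} < 5^{1/4}$, equivalently $4 < 5$, which guarantees that the $2^{n/2}$ error terms are asymptotically negligible against $5^{n/4}$; once this is noted, both $\Theta$ statements follow at once from Proposition~\ref{prop:main}. I would present this as a short paragraph of a few lines rather than a displayed multi-step computation.
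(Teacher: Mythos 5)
Your proposal is correct and follows essentially the same route as the paper: combine the three inequalities of Proposition~\ref{prop:main}, obtaining the lower bound on $R_n$ by rearranging part~2 together with part~1, and observe that $2^{1/2} < 5^{1/4}$ makes the $2^{n/2}$ terms negligible. The brief digression about rooting the extremal tree is unnecessary, but the argument you ultimately settle on is exactly the paper's.
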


\begin{proof}
Simply combine the inequalities of Proposition~\ref{prop:main} (and note that $2^{1/2} < 5^{1/4}$).
\end{proof}

\begin{rem}
Note the special role of the rooted tree $E_5$ in the construction of the trees $C_n$: these trees, which gave us the lower bound, mostly consist of copies of $E_5$, attached to a long path. The reason why this construction is essentially optimal is the fact that $5^{1/4}$ is the maximum of $\nr(T)^{1/|T|}$ taken over all rooted trees $T$, and this maximum is only attained by $E_5$.
\end{rem}

\begin{rem}
Both the upper and lower bound on $S_n$ and $R_n$ are probably not even asymptotically sharp. The following question is therefore natural:
\begin{center}
Does the limit $\lim_{n \to \infty} 5^{-n/4} S_n$ exist, and if so, what is its value?
\end{center}
It is conceivable that the limit does not exist in this form, but that it does exist if $n$ is restricted to a specific residue class modulo $4$ (compare the construction of the tree $C_n$, which depends on the residue class of $n$ modulo $4$).
\end{rem}

\end{document}